\documentclass[11pt,reqno]{amsart}

\usepackage{amsmath,amssymb,amsthm,tikz}
\usepackage[margin=1in]{geometry}
\usepackage{array}
\usepackage{xcolor}
\usepackage{hyperref}
\hypersetup{colorlinks=true, linkcolor=blue!50!black, citecolor=blue!50!black,
  urlcolor=blue!50!black}

\theoremstyle{plain}
\newtheorem{theorem}{Theorem}[section]
\newtheorem{proposition}[theorem]{Proposition}
\newtheorem{lemma}[theorem]{Lemma}
\newtheorem{corollary}[theorem]{Corollary}
\theoremstyle{definition}
\newtheorem{definition}[theorem]{Definition}
\newtheorem{remark}[theorem]{Remark}
\newtheorem{example}[theorem]{Example}

\newcommand{\NN}{\mathbb{N}}
\newcommand{\ZZ}{\mathbb{Z}}
\newcommand{\Per}{\operatorname{Per}}
\newcommand{\Lk}{\operatorname{Lk}}
\newcommand{\Vm}{V_m}
\newcommand{\Sn}{S_n}
\newcommand{\eps}{\varepsilon}
\newcommand{\axis}{\operatorname{axis}}

\begin{document}

\title[Anti-tori from full Mealy automata]
{Full Mealy automata, complete square complexes, and anti-tori}

\author{David Pask}
\date{\today}
\address{Math Department, James Cook University, 1 James Cook Dr, Douglas QLD 4814, Australia}
\email{david.a.pask@gmail.com}

\begin{abstract}
To a full $m\times n$ Mealy automaton $A$ we associate a bijection $\theta_A$, a
one-vertex rank-two graph $F_{\theta_A}$, and a one-vertex $VH$-square complex
$Y_A$ tiled by $mn$ Wang tiles. We prove that $Y_A$ contains an anti-torus if and
only if $A$ is bi-reversible and $F_{\theta_A}$ is aperiodic. The two hypotheses
are independent and play disjoint roles: bi-reversibility is exactly what makes
$Y_A$ a complete square complex, so that its universal cover splits as a product
of two trees and anti-tori can be discussed at all; and, within that setting, an
anti-torus is precisely a period-free configuration in the two-sided path space of
$F_{\theta_A}$, whose existence is the aperiodicity condition. Working at the level
of configurations removes any appeal to the geometry of products of trees from the
main equivalence; the geometric (loop-spanned) form of Wise is shown to be strictly
stronger, the lamplighter being aperiodic with no loop-spanned anti-torus.
\end{abstract}

\subjclass[2010]{Primary: {20F65,46L05}; Secondary: {20E08,20F67,37B50,37B10,20M35}}
\keywords{Single vertex rank $2$-graph, Mealey automata, anti-torus, complete square complex, Burger-Mozes groups}

\maketitle

\section{Background and motivation}

This paper grew out of the author's interests in $C^*$-algebras (in particular
higher-rank graphs) and in dynamical systems (in particular shift spaces).
$C^*$-algebraists, always in search of new examples, have studied the operator
algebras associated with products of trees and with $\tilde{A}_1\times\tilde{A}_1$
buildings \cite{RRS,KR}, as well as the cubical complexes associated with
single-vertex $k$-graphs \cite{LV,MRV}.

The attention paid to Burger--Mozes groups \cite{BM1,BM2} in this circle of ideas
led the author to the work of Glasner and Mozes \cite{GM}, in which Mealy automata
play a central role, and from there to \cite{KlPi}. One of the themes of \cite{KlPi}
is the relationship between Mealy automata and Wang tiles, a connection that
resonates with the author's earlier work on textile systems and the Wang tiles
arising from $2$-graphs.

The literature on complete square complexes led in turn to \cite{W} and \cite{Rat} who deal with the
notion of an \emph{anti-torus}. The example \cite[Example~4.1]{W} makes the
connection precise: it is a tiling arising from an automaton, and it becomes clear
that further examples arise from $2$-graphs. Determining exactly which $2$-graphs do
so is the question answered here, and the answer sets the scene for the
higher-dimensional examples we intend to pursue in future work.

\section{Introduction}

A full Mealy automaton over alphabets of sizes $m$ and $n$ is the same data as a
bijection $\theta\colon V_m\times S_n\to S_n\times V_m$, equivalently a one-vertex
rank-two graph $F_\theta$ \cite{KP,Y}, equivalently a set of $mn$ Wang tiles closed
under the factorisation rules \cite{HRSW,RSY}. The same data presents a square
complex with a single vertex, $m$ vertical and $n$ horizontal loops, and $mn$
squares. When this complex is a \emph{complete square complex} (CSC) its universal
cover is a product of two trees \cite{W,GM}, and one asks whether it contains an
\emph{anti-torus}: a plane whose induced tiling is not periodic \cite[\S5]{W}.

\begin{theorem}\label{thm:main}
Let $A=(\Vm,\Sn,\pi,\lambda)$ be a full Mealy automaton with bijection $\theta_A$,
rank-two graph $F:=F_{\theta_A}$, and $VH$-square complex $Y:=Y_A$. Then $Y$
contains an anti-torus if and only if $A$ is bi-reversible and $F$ is aperiodic.
\end{theorem}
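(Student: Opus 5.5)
We prove the two directions separately, working throughout with configurations rather than with the geometry of the universal cover. Here is the dictionary we use. A map $\ZZ^2\to\{\text{squares of }Y\}$ is \emph{admissible} if adjacent squares share the appropriate edge labels, where the labels are letters of $\Vm$ on vertical edges and of $\Sn$ on horizontal edges. Such maps are exactly the elements of the two-sided path space $\Lambda^\infty_{\ZZ^2}$ of $F$, that is, degree-preserving maps from the $\ZZ^2$-grid category into $F$. The factorisation rule $\theta_A(v,s)=(s',v')$ is precisely the Wang matching condition for a single tile.

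A plane in $\widetilde Y$ is an isometrically embedded flat $\ZZ^2$-tiled plane mapping into $Y$. Its induced tiling is therefore such a configuration, and an anti-torus is a plane whose configuration $x$ has no nonzero period $p\in\ZZ^2$ with $\sigma^p x=x$. The definition in \cite[\S5]{W} only makes sense once $Y$ is a CSC, and we treat it that way.

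\emph{Step 1: bi-reversibility $\iff$ CSC.}
We recall that $Y$ is a CSC exactly when the link of its unique vertex is a complete bipartite graph on the $2m$ vertical and $2n$ horizontal half-edges. Each square contributes four corner edges to the link. The pair $(v,s)$ determining a square corresponds to $\theta_A$ being a bijection, which is given. Reversibility ($\lambda(\cdot,s)$ a bijection on the appropriate side) and its inverse form supply the remaining three corners. Together they say that every pair of an incoming/outgoing vertical letter and an incoming/outgoing horizontal letter occurs exactly once. Thus CSC is equivalent to the three remaining corner maps being bijections, which is precisely bi-reversibility. This gives the necessity of bi-reversibility in the ``only if'' direction, since without a CSC there are no anti-tori by definition.

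\emph{Step 2: for a CSC, configurations are planes.}
Assume $A$ is bi-reversible, so $\widetilde Y\cong T_{2m}\times T_{2n}$. Every bi-infinite vertical geodesic in $T_{2m}$ and every horizontal one in $T_{2n}$ spans a flat. The completeness of links gives unique extension of partial tilings square by square in all four quadrant directions. Consequently every admissible $\ZZ^2$-configuration lifts to a flat plane, and conversely every flat plane gives a configuration. Hence anti-tori correspond bijectively, up to translation, to configurations in the path space with no nonzero period.

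\emph{Step 3: aperiodicity.}
We invoke the standard characterisation for one-vertex $2$-graphs (Robertson--Sims / Davidson--Yang) that $F$ is aperiodic iff there is an infinite path $x\in F^\infty$ with $\sigma^p x\neq\sigma^q x$ for all $p\neq q$ in $\NN^2$. The main obstacle is passing between one-sided paths in $F^\infty$ (indexed by $\NN^2$) and two-sided configurations (indexed by $\ZZ^2$).

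For ``aperiodic $\Rightarrow$ anti-torus'', take an aperiodic one-sided $x$. By bi-reversibility each square has unique predecessors in both coordinate directions given its boundary edges, so we can extend $x$ backwards to a two-sided configuration $\tilde x$. If $\tilde x$ had a period $p=a-b$ with $a,b\in\NN^2$, restricting to the quadrant shifted by $b$ would give $\sigma^a x=\sigma^b x$, a contradiction. Conversely, an anti-torus $\tilde x$ restricts to its positive quadrant to give $x$. If $\sigma^a x=\sigma^b x$ with $a\neq b$, the backward uniqueness from bi-reversibility propagates the equality to the whole plane, giving the period $a-b$ for $\tilde x$, which is a contradiction. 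The care needed is in the backward uniqueness, where one must check that both the horizontal and vertical predecessor maps are bijections; that is exactly the extra content of bi-reversibility over reversibility.
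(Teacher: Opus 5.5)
\textbf{Assessment.} Your Steps 1 and 2 match the paper's Propositions~\ref{prop:csc} and~\ref{prop:planes}. One slip in Step 1: invertibility is bijectivity of $\lambda(u,\cdot)$, and reversibility is bijectivity of $\pi(\cdot,b)$. Your ``aperiodic $\Rightarrow$ anti-torus'' half of Step 3 is also fine. Only the existence of a two-sided extension is needed there, and your $p=a-b$ argument shows that any such extension is period-free.

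\textbf{The gap.} It is in the converse, at ``the backward uniqueness from bi-reversibility propagates the equality to the whole plane.'' Four-way determinism says a tile is fixed by its two edges at any one corner. In a CSC this makes a configuration determined by its labels on a full cross, which is what Step 2 really uses. But a quadrant $b+\NN^2$ carries only the two positive half-axes of that cross. When you add a column to its left, each new tile's east edge is known, but the south edge of the lowest new tile is a free choice, and each choice propagates to a different column. So $\tilde x$ and $\tilde x(\cdot+p)$ agreeing on a quadrant tells you nothing outside it.

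\textbf{A counterexample.} Take $m,n\ge2$ and the flip $\theta(u,s)=(s,u)$, which is full and bi-reversible. Every tile has $N=S$ and $E=W$. The configurations are therefore exactly $(i,j)\mapsto T_{(v_j,s_i)}$ for arbitrary sequences $(s_i)_{i\in\ZZ}$ in $\Sn$ and $(v_j)_{j\in\ZZ}$ in $\Vm$. Take both sequences constant except for a different value at index $-1$. Then $\tilde x$ has no nonzero period, since each sequence has a single defect. Yet its restriction $x$ to $\NN^2$ is constant, so $\sigma^a x=\sigma^b x$ for all $a,b$. The positive quadrant of an anti-torus therefore need not witness one-sided aperiodicity, and your argument for this direction fails.

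\textbf{How to repair it.} The missing ingredient is uniformity of the period. By the Davidson--Yang dichotomy (Remark~\ref{rmk:aperiod}), if $F$ is not aperiodic in your one-sided sense, there is a single nonzero $(a,-b)$ with $\sigma^{a\eps_1}y=\sigma^{b\eps_2}y$ for \emph{every} one-sided path $y$. Apply this to the restriction of $\tilde x$ to each translated quadrant $z+\NN^2$, $z\in\ZZ^2$. This gives $(a,-b)\in\Per(F,\tilde x)$, contradicting period-freeness.

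\textbf{Comparison with the paper.} The paper needs none of Step 3. Definition~\ref{def:aper} defines aperiodicity directly as the existence of a period-free $x\in F^\Delta$, so Proposition~\ref{prop:antitorus} follows immediately from Lemma~\ref{lem:config} and Proposition~\ref{prop:planes}. Using the one-sided Robertson--Sims notion instead is legitimate and more standard. But the bridge back to two-sided configurations then costs exactly this uniform-period input, which backward uniqueness cannot supply.
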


The proof separates the two hypotheses. Bi-reversibility is equivalent to four-way
determinism of the Wang tiles, hence to the link of the vertex being complete
bipartite, hence to $Y$ being a CSC (Proposition~\ref{prop:csc}). Granted that,
the planes of the universal cover carry exactly the two-sided paths of $F$, with
periods matching (Proposition~\ref{prop:planes}); an anti-torus is then a
period-free configuration (Lemma~\ref{lem:config}), whose existence is aperiodicity
(Proposition~\ref{prop:antitorus}). The geometric form of Wise, an anti-torus
spanned by two loop axes, is strictly stronger and is not implied by aperiodicity
alone (Proposition~\ref{prop:loops}).

\section{Preliminaries}\label{sec:prelim}

A Mealy automaton $A=(\Vm,\Sn,\pi,\lambda)$ has $\pi\colon\Vm\times\Sn\to\Vm$ and
$\lambda\colon\Vm\times\Sn\to\Sn$. It is \emph{full} if
$\pi\times\lambda\colon(u,b)\mapsto(\lambda(u,b),\pi(u,b))$ is a bijection
$\Vm\times\Sn\to\Sn\times\Vm$; \emph{invertible} if each $\lambda(u,\cdot)$ is a
bijection of $\Sn$; \emph{reversible} if each $\pi(\cdot,b)$ is a bijection of
$\Vm$; \emph{bi-reversible} if it is invertible, reversible, and its inverse is
reversible. 
In this context, a Mealy automaton $A$ is a complete, deterministic,
letter-to-letter (that is, synchronous) transducer with the same input and output
alphabet; see \cite{KlPi,GNS} for further details.

Each pair $(u,b) \in V_m \times S_n$ gives a Wang tile $T_{(u,b)}$ with
$N=\lambda(u,b)$, $S=b$, $E=u$, $W=\pi(u,b)$, north/south coloured by letters and
east/west by states. We freely use the determinism dictionary
\cite{GM}: a full $A$ is se-deterministic, and $W_A$ is
ne-deterministic $\iff$ reversible, sw-deterministic $\iff$ invertible, four-way
deterministic $\iff$ bi-reversible.

\begin{definition}[One-vertex rank-two graph; cf.\ \cite{Y},
\cite{KP}]\label{def:Ftheta}
Let $\theta\colon\Vm\times\Sn\to\Sn\times\Vm$ be a bijection, written
$\theta(u,b)=(\lambda(u,b),\pi(u,b))$, and let $\eps_1,\eps_2$ denote the standard
generators of $\NN^2$. The associated \emph{one-vertex rank-two graph} $F_\theta$ is
the monoid
\[
F_\theta=\big\langle\, \Vm\sqcup\Sn \ \big|\ u\,b=\lambda(u,b)\,\pi(u,b)
\ \text{for all } (u,b)\in\Vm\times\Sn \,\big\rangle,
\]
with identity $\ast$, equipped with the degree functor $d\colon F_\theta\to\NN^2$
determined by $d(\ast)=(0,0)$, $d(u)=\eps_1$ for $u\in\Vm$ and $d(b)=\eps_2$ for
$b\in\Sn$ (so $d(w)$ records the numbers of $\Vm$- and $\Sn$-letters in any word
representing $w$). Viewing $F_\theta$ as a category with the single object $\ast$,
the pair $(F_\theta,d)$ is a rank-two graph: for every $w\in F_\theta$ and all
$p,q\in\NN^2$ with $d(w)=p+q$ there are unique $w',w''\in F_\theta$ with $d(w')=p$,
$d(w'')=q$ and $w=w'w''$. The commuting square through each pair $(u,b)$ is the
factorisation $u\,b=\lambda(u,b)\,\pi(u,b)$, that is, the Wang tile $T_{(u,b)}$.
\end{definition}

\begin{remark}\label{rem:onevertex}
Bijectivity of $\theta$ is exactly the factorisation property, and conversely every
rank-two graph with a single vertex is $F_\theta$ for a unique such $\theta$. For a
full automaton $A$ we write $F:=F_{\theta_A}$, where $\theta_A=\pi\times\lambda$; the
hypothesis that $A$ be full is precisely what makes $F_{\theta_A}$ a rank-two graph. Let $W_A$ denote the set of tiles $T_{(u,b)}$ where $(u,b) \in V \times S$, indexed by $\mathcal{C}_{T_A}$.
\end{remark}

\noindent
Let $\Delta$ denote the rank-two graph with object set $\ZZ^2$, morphisms
$\{(p,q)\in\ZZ^2\times\ZZ^2:p\le q\}$, and degree map $d(p,q)=q-p$ (see \cite{KP}).

\begin{definition}
Let $\Lambda$ be a rank-two graph. The \emph{two-sided infinite path space} of
$\Lambda$ is
\[
\Lambda^\Delta ~=~ \{\, x:\Delta\to\Lambda : x \text{ is a degree-preserving
functor}\,\}.
\]
\end{definition}

Following \cite{KP}, the two-sided path space $F^\Delta$ of $F=F_{\theta_A}$ is
identified with the set of tilings of $\ZZ^2$ by $W_A$ respecting the
factorisation rules. For $x\in F^\Delta$ a \emph{period} is a $p\in\ZZ^2$ with
$x(\cdot+p,\cdot+p)=x$; the periods form a subgroup $\Per(F,x)\le\ZZ^2$, and we set
$\Per(F)=\bigcap_x\Per(F,x)$.

\begin{definition}\label{def:aper}
$F$ is \emph{aperiodic} if some $x\in F^\Delta$ has $\Per(F,x)=\{0\}$; otherwise it
is \emph{periodic}.
\end{definition}

\begin{remark}[Periodic dichotomy] \label{rmk:aperiod}
By Davidson--Yang \cite[Theorem~3.1]{DY}, exactly one of
the following holds: either a single nonzero $(a,-b)$ lies in $\Per(F,x)$ for every
$x$ (periodic), or some configuration is period-free (aperiodic). In the periodic
case the period is implemented by a bijection $\gamma\colon F^{a\eps_1}\to
F^{b\eps_2}$ with $\mu\nu=\gamma(\mu)\gamma^{-1}(\nu)$.
\end{remark}

\noindent
The following result appears in \cite[Corollary 4.1]{DY}.

\begin{proposition}[Aperiodicity criterion]\label{prp:aperiod}
Write $\theta(v_i,s_j)=(\alpha_{v_i}(s_j),\beta_{s_j}(v_i))$, where
$\alpha_{v_i}\colon S_n\to S_n$ and $\beta_{s_j}\colon V_m\to V_m$. If there is a
subset $B\subseteq S_n$ with $|B|\ge2$ and a word $i_1\ldots i_k\in V_m^k$ such that
$\alpha_{i_1}\circ\alpha_{i_2}\circ\cdots\circ\alpha_{i_k}(B)=B$, then $F_\theta$ is
aperiodic. Similarly, if there is a subset $A\subseteq V_m$ with $|A|\ge2$ and a word
$j_1\ldots j_k\in S_n^k$ such that
$\beta_{j_1}\circ\beta_{j_2}\circ\cdots\circ\beta_{j_k}(A)=A$, then $F_\theta$ is
aperiodic.
\end{proposition}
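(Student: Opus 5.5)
The plan is to argue by contradiction using the periodic dichotomy of Remark~\ref{rmk:aperiod}. Suppose $F_\theta$ is periodic, so that some fixed nonzero $(a,-b)\in\ZZ^2$ lies in $\Per(F_\theta,x)$ for every $x\in F_\theta^\Delta$. I treat the first hypothesis, the one with $B\subseteq S_n$; the second is the same argument with the roles of $V_m$ and $S_n$, and of rows and columns, exchanged. Write $\alpha_w=\alpha_{i_1}\circ\cdots\circ\alpha_{i_k}$ for $w=i_1\ldots i_k$. Reading a horizontal strip of $V_m$-edges spelling $w$, the relation $v\,s=\alpha_v(s)\beta_s(v)$ says that the $S_n$-letter entering at the bottom of the strip leaves at the top as $\alpha_w$ of it (up to the obvious convention on the order of composition). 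Since $\alpha_w(B)=B$ and $B$ is finite, $\alpha_w|_B$ is a permutation of $B$. Choose $N\ge1$ with $\alpha_w^N|_B=\mathrm{id}_B$ and put $u=w^N$. Then a vertical $S_n$-edge labelled $s\in B$ crossing a $u$-strip comes out labelled $s$ again.

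Next I build a large family of configurations with a rigid vertical structure but a free choice in the horizontal direction. Stack horizontal $u$-strips along the vertical direction. In each column of vertical $S_n$-edges that crosses the strips at a fixed position, choose a letter $s_j\in B$ independently for each column index $j\in\ZZ$. The commuting squares determine the remaining labels inside each strip, since $\theta$ is a bijection. The invariance $\alpha_u|_B=\mathrm{id}$ guarantees that the label $s_j$ recurs consistently at every strip, so the choices are compatible. Because $|B|\ge2$, this yields a partial tiling for every sequence $(s_j)_{j\in\ZZ}\in B^{\ZZ}$. I then extend each partial tiling to a full $x\in F_\theta^\Delta$. A one-vertex $2$-graph has no sources and is row-finite, so every finite rectangle (finite path) extends to larger rectangles. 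A König/compactness argument on the nested finite patterns then gives a two-sided path agreeing with the prescribed pattern.

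For the contradiction I split according to whether $a=0$. If $a\neq0$, the period $(a,-b)$ shifts the column index by $a$ and translates the strip pattern vertically by $-b$. The periodic strip structure has period $k N$ vertically, so after replacing $(a,-b)$ by the multiple $kN(a,-b)$ the strips are mapped to strips. The column labels then satisfy $s_{j+kNa}=s_j$ for all $j$. Choosing $(s_j)$ not periodic, which is possible since $|B|\ge2$, contradicts $(a,-b)\in\Per(F_\theta,x)$. If $a=0$, the period is purely vertical, of length $b\neq0$. Here I rerun the construction with the freedom placed vertically: between consecutive $u$-strips I insert, at each strip level, a freely chosen element of $B$ that propagates through that strip. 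This produces configurations whose labels along a fixed column form a non-periodic sequence in $B^{\ZZ}$, again a contradiction.

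The main obstacle is making the encoding of the sequence genuinely visible to the period. One must check carefully that the $B$-labels really sit on edges that the shift by $(a,-b)$ carries to edges of the same kind. One must also handle the $a=0$ and $b=0$ degenerate cases without hand-waving. For this I would first try to use the bijection $\gamma\colon F^{a\eps_1}\to F^{b\eps_2}$ from Remark~\ref{rmk:aperiod}. The relation $\mu\nu=\gamma(\mu)\gamma^{-1}(\nu)$ means that the $S_n$-outputs of a $V_m$-path of degree $a\eps_1$ are determined by $\gamma$ independently of the input. Feeding in two distinct letters of $B$ through $u^{a}$, which returns each to itself, should then contradict this rigidity directly. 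That would bypass the combinatorial case split.
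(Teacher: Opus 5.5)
Your main construction in the second paragraph does not work. The map $\alpha_w$ only describes what happens to a \emph{single} $S_n$-letter $s$ pushed past the $V_m$-word $w$. In the unique factorisation $ws=s'w'$ one has $s'=\alpha_w(s)$, but $w$ is replaced by $w'$, whose letters are $\beta$-images depending on $s$. Any other letter (in the neighbouring column, or in the next strip) meets $w'$ or a further image, not $w$, and $\alpha_{w'}$ need not preserve $B$.

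Concretely, in a configuration $x\in F_\theta^\Delta$ the labels are constrained as follows:
\begin{itemize}
\item each row of $V_m$-labels is obtained from the adjacent row via the $\beta$'s, driven by the $S_n$-labels between them;
\item an $S_n$-label changes only when moved in the $\eps_1$-direction past a $V_m$-edge;
\item consecutive $S_n$-edges in the $\eps_2$-direction are not linked by any local rule.
\end{itemize}
So you cannot ``stack $u$-strips'' and let independently chosen $s_j\in B$ each pass through a copy of $u$ unchanged. The $V_m$-labels are not free, and the partial tiling indexed by $(s_j)\in B^{\ZZ}$ is never shown to exist. The compactness step therefore has nothing consistent to extend, and both your $a\neq0$ and $a=0$ cases rest on it.

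The idea in your last paragraph is the right one and should be the whole proof; it also removes the case split.
\begin{itemize}
\item \emph{Both components of the period are nonzero.} Since $\gamma\colon F^{a\eps_1}\to F^{b\eps_2}$ is a bijection, $m^a=n^b$, so for $m,n\ge2$ both $a,b\ge1$. Some such hypothesis is genuinely needed: for $m=1$, $n=2$ and $\theta(v,s)=(s,v)$, every configuration has period $\eps_1$, yet $\alpha_v(S_n)=S_n$.
\item \emph{Constancy.} Take $\mu=v_{i_1}\cdots v_{i_a}$ and $\nu=s\nu'$ with $s\in S_n$. Pushing $s$ past $\mu$ gives $\mu s=\alpha_\mu(s)\mu_1$ with $\alpha_\mu=\alpha_{i_1}\circ\cdots\circ\alpha_{i_a}$. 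By uniqueness of factorisation, the first letter of $\gamma(\mu)$ is $\alpha_\mu(s)$. Since $\gamma(\mu)$ does not depend on $\nu$, $\alpha_\mu$ is \emph{constant} on $S_n$ for every word $\mu$ of length $a$.
\item \emph{Contradiction.} Splitting $w^a$ into $k$ consecutive blocks of length $a$ shows that $\alpha_{w^a}=\alpha_w^{\,a}$ is constant. Yet it permutes $B$, with $|B|\ge2$.
\item \emph{The $\beta$ case.} Symmetrically, the last letter of $\gamma^{-1}(\nu)$ is the image of the last letter of $\mu$ under the composite of $\beta$'s along $\nu$. So those composites are constant, and the same argument applies to $A$.
\end{itemize}

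Do not model your argument on the paper's proof. It asserts that periodicity makes every composite $\alpha_{i_1}\circ\cdots\circ\alpha_{i_k}$ the identity. That is false: for the periodic $\theta_1$ of Table~\ref{tables}, $\alpha_{v_0}$ is constantly $s_0$. It would also leave every $B$ invariant, which is the opposite of what is needed. The correct consequence of the period is constancy of the composites over words whose length is a multiple of $a$, as above.
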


\begin{proof}
We prove the contrapositive of each statement. Suppose $F_\theta$ is not aperiodic.
By Remark~\ref{rmk:aperiod} it then has a period $(a,-b)$ with
$(a,b)\neq(0,0)$, so $x(m+a\eps_1,\,n+b\eps_2)=x(m,n)$ for every $x\in F_\theta^\Delta$
and every $(m,n)\in\Delta$. Reading this along the two axes shows that for every
$j\in S_n$ and every word $i_1\ldots i_k\in V_m^k$,
\[
\alpha_{i_1}\circ\cdots\circ\alpha_{i_k}(j)=j,
\]
and for every $i\in V_m$ and every word $j_1\ldots j_k\in S_n^k$,
\[
\beta_{j_1}\circ\cdots\circ\beta_{j_k}(i)=i.
\]
Thus no proper composition of the $\alpha$'s or $\beta$'s can fix a subset of size at
least two nontrivially, which is the required contrapositive.
\end{proof}

A one-vertex $VH$-square complex $Y$ has subcomplexes $\mathcal V_Y,\mathcal H_Y$ of
vertical and horizontal edges; its link $\Lk(*)$ is bipartite with $2m$ vertical and
$2n$ horizontal germs, one edge per square corner. $Y$ is a \emph{complete square
complex} (CSC) if $\Lk(*)$ is complete bipartite; then $\widetilde Y\cong
\mathcal V_{\widetilde Y}\times\mathcal H_{\widetilde Y}$ is a product of two trees
\cite[Theorem~3.8]{W}. We attach to $A$ the complex $Y_A$ with vertical loops $\Vm$,
horizontal loops $\Sn$, and one square per tile $T_{(u,b)}$.

\begin{definition}
Let $E,F$ be directed graphs and let $p,q\colon F\to E$ be surjective graph
morphisms. We call $T=(p,q\colon F\to E)$ a \emph{textile system} if the map
$\iota\colon F^1\to E^1\times E^1\times F^0\times F^0$,
$\iota(f)=(p(f),q(f),r(f),s(f))$, is injective.
\end{definition}

\noindent
Let $p\colon F\to E$ be a graph morphism. We say that $p$ has \emph{unique $r$-path
lifting} (respectively, \emph{unique $s$-path lifting}) if for every $e\in p(F^1)$
and every $w\in p^{-1}(r(e))$ (respectively, $w\in p^{-1}(s(e))$) there is a unique
$f\in wF^1$ (respectively, $f\in F^1w$) with $p(f)=e$.

\begin{definition}[Textile system from a Mealy automaton]
Let $A=(V,S,\pi,\lambda)$ be a Mealy automaton, with $\pi\colon V\times S\to V$ and
$\lambda\colon V\times S\to S$. Define directed graphs $E,F$ by
\[
E^0=\{*\},\quad E^1=V,\quad F^0=S,\quad F^1=V\times S,
\qquad r(u,b)=\lambda(u,b),\ \ s(u,b)=b,
\]
and graph morphisms $p,q\colon F\to E$ by
\[
p(u,b)=\pi(u,b),\qquad q(u,b)=u \qquad\text{for } (u,b)\in V\times S=F^1.
\]
Since the map
\[
\iota\colon (u,b)\longmapsto (p(u,b),r(u,b),s(u,b),q(u,b))=(\pi(u,b),\lambda(u,b),b,u)
\]
is injective, $T_A=(p,q\colon F\to E)$ is a textile system, with squares
$\mathcal{C}_{T_A}$ of the form
\begin{equation} \label{eq:textile}
\begin{array}{l}
\begin{tikzpicture}[>=stealth,xscale=2,yscale=2]
\draw (0,0)--(1,0)--(1,1)--(0,1)--(0,0)--(1,1)--(0,1)--(1,0);

\draw[dotted] (0,0)--(-0.15,-0.15);
\draw[dotted] (1.15,1.15)--(1,1);
\node at (1.2,1.15) {\Tiny$y=x$};

\node at (-0.65,0.5) {\tiny $T_{(u,b)}:=$};

\node at (0.52,0.85) {$\scriptstyle \pi(u,b)$};
\node at (0.5,0.15) {$\scriptstyle u$};
\node at (0.2,0.5) {\tiny $\lambda(u,b)$};
\node at (0.8,0.5) {\tiny $b$};

\end{tikzpicture}
\end{array}
\end{equation}
\end{definition}

\noindent
Our conventions differ from those of \cite[\S10.2]{KlPi}: we have adapted the
orientation so as to make the correspondence with rank-two graphs more transparent,
with the effect that our tile is the reflection of theirs in the line $y=x$ (see
\eqref{eq:textile}). We use the $N,E,S,W$ labelling of the four edges of a tile and
the notions of $NE$-determinism, and so on, following \cite[p.~398]{KlPi}.

\section{Bi-reversibility is completeness of the square complex}

\begin{proposition}\label{prop:csc}
Let $A$ be full. Then $Y_A$ is a complete square complex if and only if $A$ is
bi-reversible.
\end{proposition}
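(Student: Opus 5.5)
The plan is to compare the link $\Lk(*)$ with the complete bipartite graph by counting edges and then to identify injectivity conditions corner by corner. Since $Y_A$ has one vertex, $m$ vertical loops, $n$ horizontal loops and $mn$ squares, the link has $2m$ vertical germs $u^{\pm}$ ($u\in\Vm$), $2n$ horizontal germs $b^{\pm}$ ($b\in\Sn$), and exactly $4mn$ edges, one for each corner of each square. The complete bipartite graph on these vertex sets also has $(2m)(2n)=4mn$ edges. Hence $\Lk(*)$ is complete bipartite if and only if no pair of germs $\{u^{\epsilon},b^{\delta}\}$ is joined by two distinct corners. This reduces the statement to an injectivity condition.

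First I will fix the orientations, with vertical edges oriented up and horizontal edges oriented right. In the tile $T_{(u,b)}$ the corner where the $E$- and $S$-sides meet joins the terminal germ of one side to the initial germ of the other, and similarly for the other corners. The key observation is that each of the four corner types (say $NE$, $NW$, $SE$, $SW$) produces edges of a single fixed sign type $(\epsilon,\delta)\in\{\pm\}^2$. Moreover, the four corner types use four different sign types. Corners of different types therefore never give the same edge of the link. For a fixed corner type, say $SE$, the edge contributed by $T_{(u,b)}$ is $\{\,(\text{colour of }E)^{\epsilon},(\text{colour of }S)^{\delta}\}$. So the $mn$ edges of that type are pairwise distinct exactly when the map $T\mapsto(\text{colour of }E,\text{colour of }S)$ is injective on $W_A$. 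Since $|W_A|=mn=|\Vm\times\Sn|$, this is the same as the map being bijective, which is precisely $SE$-determinism of the tile set. Putting the four corners together, $Y_A$ is a CSC if and only if $W_A$ is $NE$-, $NW$-, $SE$- and $SW$-deterministic, that is, four-way deterministic.

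It remains to translate this into automaton terms using the determinism dictionary recalled in Section~\ref{sec:prelim}. Fullness of $A$ gives $se$-determinism automatically; this is just bijectivity of $\theta_A$, since $(E,S)=(u,b)$ determines the tile. The dictionary identifies $ne$-determinism with reversibility and $sw$-determinism with invertibility, and states that four-way determinism is equivalent to bi-reversibility. Combining this with the previous paragraph proves the proposition in both directions.

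The main obstacle is the bookkeeping in the second step. I need to verify that the four corners of a square really occupy four distinct sign classes of germ pairs, so that no coincidence can occur between different corner types. I also need to match each corner of the reflected tile \eqref{eq:textile} with the correct determinism condition of \cite{KlPi}. My plan is to write out the corner $\to$ (side, side, sign, sign) table explicitly for a single tile, with the orientation inherited from $E$ and $F$ in the textile system $T_A$, and to check that the four sign pairs are $(+,+)$, $(+,-)$, $(-,+)$ and $(-,-)$. Once this table is fixed, the rest of the argument is the counting given above.
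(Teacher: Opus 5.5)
Your proposal is correct and is essentially the paper's proof. In both, the four corners of each tile fall into four distinct sign classes of germ pairs, so $\Lk(*)=K_{2m,2n}$ exactly when each corner map $T_{(u,b)}\mapsto(\text{its two colours there})$ is a bijection onto the $mn$ pairs; that is four-way determinism, which the cited dictionary identifies with bi-reversibility, and your $4mn$ edge count just makes the paper's ``exactly one corner'' step explicit. One harmless slip: $SE$-determinism holds automatically because the $SE$ pair is $(u,b)$ itself, whereas bijectivity of $\theta_A$ is $NW$-determinism (the $(N,W)$ pair is $(\lambda(u,b),\pi(u,b))$), but you only use ``four-way deterministic $\iff$ bi-reversible'', so nothing breaks.
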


\begin{proof}
$\Lk(*)$ is bipartite with parts the $2m$ vertical germs $\{u^\pm\}$ and the $2n$
horizontal germs $\{b^\pm\}$; the four corners of $T_{(u,b)}$ contribute, one to
each sign pattern, the (north,east), (south,east), (north,west), (south,west)
colour pairs. Thus $\Lk(*)=K_{2m,2n}$ if and only if every (vertical germ,
horizontal germ) pair occurs as exactly one corner, if and only if for each of the
four corner positions the map $T_{(u,b)}\mapsto$ (its two colours there) is a
bijection from the $mn$ tiles onto the $mn$ possible pairs, if and only if $W_A$ is
four-way deterministic, if and only if $A$ is bi-reversible \cite{GM}. Equivalently,
the squares $\mathcal C_{T_A}$ are complete (in the sense of \cite{HRSW}) if and only
if the textile maps $p,q$ have unique $r$- and $s$-path lifting.
\end{proof}

If $A$ is full but not bi-reversible, $Y_A$ is not a CSC, $\widetilde Y_A$ is not a
product of trees, and the planes below are undefined; such a $Y_A$ has no
anti-torus for want of the ambient product geometry.

\section{Configurations, planes, and anti-tori}

Assume henceforth that $A$ is full and bi-reversible, so $\widetilde Y_A\cong
\mathcal V\times\mathcal H$ (cf. \cite{GM, W}).

\begin{proposition}\label{prop:planes}
There is a bijection between the planes $\gamma_H\times\gamma_V$ of $\mathcal
V\times\mathcal H$ spanned by a bi-infinite horizontal and vertical geodesic and the
configurations $x\in F^\Delta$, under which the induced square tiling of the plane
is $x$, and a translation by $p\in\ZZ^2$ preserves the tiling if and only if
$p\in\Per(F,x)$.
\end{proposition}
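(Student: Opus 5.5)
The plan is to identify a plane $\gamma_H\times\gamma_V$ in $\mathcal V\times\mathcal H$ with a square-tiled copy of $\RR^2$ carrying a $\ZZ^2$ vertex grid, and to read off one tile per unit square. A plane is the product of a bi-infinite geodesic in each tree factor. Its cells are the squares of $\widetilde Y_A$ lying over it, and each such square projects under $\widetilde Y_A\to Y_A$ to a square of $Y_A$, that is, to a tile $T_{(u,b)}$. So I first fix base points and orientations: a vertex $v_0$ of the plane, identified with $(0,0)$, and directions along $\gamma_H$ and $\gamma_V$. Each unit square $[i,i+1]\times[j,j+1]$ then receives a tile, oriented so that its vertical edges are labelled by states and its horizontal edges by letters. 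Adjacent squares share an edge label, since they share an edge of $\widetilde Y_A$. The resulting assignment therefore respects the factorisation rules, and by the identification of $F^\Delta$ with tilings of $\ZZ^2$ by $W_A$ following \cite{KP}, it defines a configuration $x\in F^\Delta$. There is a gauge issue to fix. Orientation reversals of the loops mean a geodesic edge is traversed as $u$ or as $u^{-1}$. I will count planes up to the obvious choices of base point and orientation, or equivalently use the directed structure of the $VH$-complex, so that every edge of the plane carries a well-defined letter.

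For the inverse map, given $x\in F^\Delta$, I build a plane by developing it. I read off the bottom row and left column of labels to obtain bi-infinite words in $\Vm^{\pm}$ and $\Sn^{\pm}$. These lift uniquely from $v_0$ to geodesics $\gamma_V$ and $\gamma_H$ in the tree factors; no backtracking occurs because consecutive edges come from consecutive tiles. The key step is that the squares of $\widetilde Y_A$ over $\gamma_H\times\gamma_V$ are forced. Here completeness (Proposition~\ref{prop:csc}) is used: four-way determinism says that each corner configuration (vertical germ, horizontal germ) at a vertex extends to exactly one square. Starting from the corner at $v_0$ and propagating square by square in all four quadrants, each new square is determined by two adjacent edges already placed, and it must agree with $x$, which is itself determined by the same two edges via the factorisation rule. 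Hence the plane's tiling is exactly $x$. The two constructions are mutually inverse because each is determined by the edge labels along the two axes.

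For the periods, a translation by $p\in\ZZ^2$ of the plane preserves the induced tiling exactly when $x(\cdot+p,\cdot+p)=x$, which is the definition of $\Per(F,x)$. This is immediate once the coordinates are fixed as above.

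The main obstacle I expect is the first step: making the orientation and label conventions consistent. Edges in a geodesic of a tree factor may be traversed against the loop orientation, while $F^\Delta$ uses only positive degrees. I would handle this as follows. In a $VH$-complex arising from the automaton, a square's four sides come with orientations, so directed parallel classes are coherent across the whole plane; the direction of $\gamma_V$ can therefore be chosen so that all its edges are positively labelled in the tree's orientation. If this fails, that is, if the directions are not coherent, I would instead pass to the bi-reversible symmetric description, in which the inverse tiles are again in $W_A$. The cost is that the bijection would then have to be stated up to the reflections of $\ZZ^2$.
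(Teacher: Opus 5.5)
The overall architecture is the paper's: read the labels off the plane, develop a configuration back into $\mathcal V\times\mathcal H$ using completeness, and match grid translations with $\Per(F,x)$. On the planes where it applies, your version (fixed base vertex, corner-by-corner development in all four quadrants) is more careful than the paper's sketch.

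\textbf{The gap.} It is in the orientation step, and neither of your two resolutions works.

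Coherence across squares only says that the vertical edges in a single row of tiles (one parallel class) point the same way. Consecutive edges of $\gamma_V$ bound different rows, and nothing relates their orientations. A bi-infinite geodesic of the $2m$-valent tree is a reduced word in $\Vm^{\pm1}$. For $m\ge2$, words such as $\cdots u_1u_2^{-1}u_1u_2^{-1}\cdots$ are reduced, so no choice of direction makes $\gamma_V$ positive.

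In the plane over such a geodesic, alternate rows are reversed. A reversed tile reads, in plane coordinates, as a tile of the inverse automaton: from $ub=b'u'$ one gets $u^{-1}b'=b\,u'^{-1}$. These tiles are generally not in $W_A$. In Example~\ref{ex:53}, a reversed tile with state $f_0$ carries its lower letter to its upper letter by $\alpha_{f_0}^{-1}=(g_0\,g_2\,g_1)$, which is neither $\alpha_{f_0}$ nor $\alpha_{f_1}$.

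Your fallback also fails. A single global reflection of $\ZZ^2$ cannot make rows of both orientations positive at once. So such a plane determines no element of $F^\Delta$. The plane-to-configuration map is undefined there, and the bijection, as stated, cannot hold once $m\ge2$ or $n\ge2$.

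Your development step uses the true implication: a positive word never backtracks. That, rather than ``consecutive tiles'', is the real reason the lifted paths are geodesics. It is the converse, that every geodesic becomes positive after reorientation, which is false.

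\textbf{The fix.} Restrict the statement rather than arguing the mixed planes away. $F^\Delta$ is in bijection with planes whose two geodesics are directed paths, equipped with a base vertex and taken modulo the deck group $\pi_1(Y_A)$. Equivalently, take planes based at one fixed lift of $*$. Without this, $P$ and $gP$ for $g\in\pi_1(Y_A)$ induce the same $x$, so fixing base point and orientation alone does not give injectivity.

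This restriction is exactly what the paper's one-line proof tacitly assumes when it says the crossed edges give bi-infinite words in $\Vm$ and $\Sn$. On this restricted class your argument is complete.

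You should then state explicitly that ``plane'' in Definition~\ref{def:antitorus} and Lemma~\ref{lem:config} must be read in the same restricted sense. A period-free plane with mixed orientations has no configuration $x$ to which $\Per(F,x)$ could be applied, so the proposition says nothing about it.
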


\begin{proof}
A plane in the product of trees is a copy of $\ZZ^2$ tiled by lifts of the squares;
reading the vertical and horizontal edges crossed gives bi-infinite words in $\Vm$,
$\Sn$, and the factorisation rules of $F$ are the local compatibility relations of
the tiles, so the tiling is a degree-preserving functor $x\colon\Delta\to F$, and
every $x$ arises. A grid translation by $p$ fixes the tiling if and only if
$x(\cdot+p,\cdot+p)=x$, that is, $p\in\Per(F,x)$.
\end{proof}

\begin{definition}\label{def:antitorus}
A plane in $\widetilde Y_A$ is an \emph{anti-torus} if its induced tiling is not
periodic, i.e.\ has no nonzero period. We say $Y_A$ \emph{contains an anti-torus}
if some plane is one. (A fortiori such a plane does not factor through a torus
$\mathbb T^2\to Y_A$.)
\end{definition}

\begin{lemma}[Configuration model of anti-tori]\label{lem:config}
A plane of $\widetilde Y_A$ is an anti-torus if and only if its configuration
$x\in F^\Delta$ has $\Per(F,x)=\{0\}$.
\end{lemma}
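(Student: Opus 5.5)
This lemma is essentially a direct combination of Definition~\ref{def:antitorus} with Proposition~\ref{prop:planes}. The plan is to unwind both and check that the notion of ``period of the induced tiling'' used in the definition of an anti-torus coincides with $\Per(F,x)$.

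Fix a plane $P=\gamma_H\times\gamma_V$ in $\widetilde Y_A\cong\mathcal V\times\mathcal H$. Such a plane exists because $A$ is standing-assumed full and bi-reversible, so Proposition~\ref{prop:csc} applies. Let $x\in F^\Delta$ be the configuration attached to $P$ by the bijection of Proposition~\ref{prop:planes}.

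First, I will make precise that the induced square tiling of $P$ is a labelling of the unit squares of a copy of $\ZZ^2$ by tiles of $W_A$. The grid coordinates come from choosing a base vertex on $\gamma_H$ and on $\gamma_V$, together with orientations of these geodesics. A ``period'' of this tiling is a grid translation $p\in\ZZ^2$ that preserves the labels. I will note that this notion does not depend on the choice of base point or orientation: changing these conjugates the translation action by an affine automorphism of $\ZZ^2$, which preserves whether the group of periods is trivial.

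Second, Proposition~\ref{prop:planes} states that the tiling of $P$ is exactly $x$, and that a translation by $p$ preserves it if and only if $p\in\Per(F,x)$. So the set of tiling periods equals $\Per(F,x)$.

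Finally, by Definition~\ref{def:antitorus}, $P$ is an anti-torus exactly when its tiling has no nonzero period. By the previous step this holds if and only if $\Per(F,x)=\{0\}$, and both directions follow at once.

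The only point needing care is the identification of periods in the first two steps. One must check that ``periodic'' in Definition~\ref{def:antitorus} means invariance under a nonzero translation within the plane, not invariance under some other isometry of $\widetilde Y_A$ such as a reflection or an element of $\pi_1(Y_A)$ acting on a different plane. With the definition read as stated (``has no nonzero period''), this is immediate, and no product-of-trees geometry beyond Proposition~\ref{prop:planes} is needed.
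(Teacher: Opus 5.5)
Your proposal is correct and is the paper's argument: the lemma follows directly from Definition~\ref{def:antitorus} together with the period-matching clause of Proposition~\ref{prop:planes}, which says a translation by $p$ preserves the tiling if and only if $p\in\Per(F,x)$. Your remarks on independence of base point and orientation are harmless but not needed, because the bijection of Proposition~\ref{prop:planes} already fixes the grid coordinates through the configuration $x$.
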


\begin{proof}
This is immediate from Proposition~\ref{prop:planes}: the plane has a nonzero period if and only if
$\Per(F,x)\neq\{0\}$.
\end{proof}

Lemma~\ref{lem:config} is the point of the present treatment: it turns the
geometric notion of an anti-torus into the purely combinatorial statement
$\Per(F,x)=0$, with no further appeal to the structure theory of products of trees.

\begin{proposition}\label{prop:antitorus}
With $A$ full and bi-reversible, $Y_A$ contains an anti-torus if and only if the
rank-two graph $F$ is aperiodic.
\end{proposition}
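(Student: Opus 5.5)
The plan is to chain Lemma~\ref{lem:config} with Definition~\ref{def:aper} through the bijection of Proposition~\ref{prop:planes}. Under the standing hypothesis that $A$ is full and bi-reversible, Proposition~\ref{prop:csc} gives that $Y_A$ is a complete square complex. Hence $\widetilde Y_A\cong\mathcal V\times\mathcal H$, and the planes $\gamma_H\times\gamma_V$ are defined. Both directions are then a matter of transporting the period-freeness condition across this bijection.

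For the forward direction, suppose some plane $P$ of $\widetilde Y_A$ is an anti-torus. Proposition~\ref{prop:planes} attaches to $P$ a configuration $x\in F^\Delta$ whose induced tiling is that of $P$. Lemma~\ref{lem:config} then says $\Per(F,x)=\{0\}$. So $x$ witnesses aperiodicity of $F$ in the sense of Definition~\ref{def:aper}.

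For the converse, suppose $F$ is aperiodic, and pick $x\in F^\Delta$ with $\Per(F,x)=\{0\}$. By the surjectivity half of Proposition~\ref{prop:planes}, there is a plane $P=\gamma_H\times\gamma_V$ whose configuration is $x$. Lemma~\ref{lem:config} then shows that $P$ is an anti-torus.

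The only step needing care is this surjectivity: every configuration must be realized by an actual plane in $\mathcal V\times\mathcal H$. I would check it directly. Start from the basepoint and lift the bottom row and left column of $x$ as a bi-infinite horizontal geodesic and a bi-infinite vertical geodesic; these are paths in the trees with no backtracking, because consecutive edges of the tiling are read with consistent orientation. Then use four-way determinism, which holds by bi-reversibility and Proposition~\ref{prop:csc}, to fill each square uniquely in all four quadrants. Completeness of the link guarantees that each required corner exists and is unique, so the lifted squares agree with the tiles of $x$ and the union is the product $\gamma_H\times\gamma_V$. I expect this realization argument to be the main obstacle, and everything else is bookkeeping.

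Finally, I would note that the Davidson--Yang dichotomy (Remark~\ref{rmk:aperiod}) is not needed for this equivalence. It only enters if one wants to restate periodicity of $F$ as a single global period $(a,-b)$ shared by all planes.
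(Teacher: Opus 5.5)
Your proposal is correct and is essentially the paper's own proof: Lemma~\ref{lem:config} identifies anti-tori with period-free configurations, the realisation half of Proposition~\ref{prop:planes} produces a plane for any period-free $x$, and Definition~\ref{def:aper} closes the equivalence, while your lifting-and-filling sketch only re-checks the realisation step that the paper cites. If anything, the direction that leans hardest on Proposition~\ref{prop:planes} is the forward one rather than surjectivity: a geodesic of $\mathcal V$ or $\mathcal H$ may read a reduced word with letters of both signs (say $u_0$ followed by $u_1^{-1}$), and the tiling of such a plane is not literally an element of $F^\Delta$; the paper relies on that proposition in exactly the same way, so this does not separate your argument from the paper's.
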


\begin{proof}
By Lemma~\ref{lem:config} a plane is an anti-torus if and only if its configuration is
period-free, and by Proposition~\ref{prop:planes} every configuration occurs.
Hence $Y_A$ contains an anti-torus if and only if some $x\in F^\Delta$ has $\Per(F,x)=\{0\}$,
which is Definition~\ref{def:aper}.
\end{proof}

\section{The Wise loop-spanned form}\label{sec:loops}

Wise's anti-tori \cite{W} are built from a pair of loops. Call a plane of
$\widetilde Y_A$ \emph{loop-spanned} if both its geodesics are axes of loops, i.e.\
if it is $\axis(a)\times\axis(b)$ for a vertical loop $a$ and a horizontal loop $b$;
write $x_{a,b}$ for its configuration, whose central cross is the periodic pair
$(a^\infty,b^\infty)$. A \emph{Wise anti-torus} is a loop-spanned plane that is
period-free --- equivalently, a pair $(a,b)$ no positive powers of which commute. One
might hope aperiodicity supplies such a plane; it does not.

\begin{proposition}\label{prop:loops}
A Wise anti-torus is an anti-torus in the sense of Definition~\ref{def:antitorus};
hence if $Y_A$ has a Wise anti-torus then $A$ is bi-reversible and $F$ is aperiodic.
The converse is \emph{false}: there are full bi-reversible $A$ with $F$ aperiodic for
which every loop-spanned plane is periodic, so $Y_A$ has no Wise anti-torus.
\end{proposition}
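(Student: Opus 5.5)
The forward implication is formal. A Wise anti-torus is a loop-spanned plane $\axis(a)\times\axis(b)$ whose induced tiling has no nonzero period. It is therefore a plane of $\widetilde Y_A$ satisfying Definition~\ref{def:antitorus}. Such planes exist only when $Y_A$ is a CSC, which by Proposition~\ref{prop:csc} means $A$ is bi-reversible. Once bi-reversibility holds, Lemma~\ref{lem:config} says the configuration $x_{a,b}$ satisfies $\Per(F,x_{a,b})=\{0\}$, so $F$ is aperiodic by Definition~\ref{def:aper}; equivalently, Proposition~\ref{prop:antitorus} applies. This disposes of the first two sentences of the proposition.

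For the converse I would give an explicit counterexample, taking the lamplighter-type automaton with $m=n=2$ that the abstract alludes to. I would write its four tiles $T_{(u,b)}$ out in the paper's orientation and proceed in three steps.

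\emph{Step 1 (fullness and bi-reversibility).} Check that $\theta_A$ is a bijection. Then check four-way determinism directly, by listing for each of the four corner positions the $4$ colour pairs and seeing that each occurs exactly once. Bi-reversibility follows by the dictionary of \cite{GM}. If the standard lamplighter automaton fails this check, I would replace it with a lamplighter-like bi-reversible $2\times2$ automaton having the same $\lambda$-actions, since only the properties below are used.

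\emph{Step 2 (aperiodicity).} Apply Proposition~\ref{prp:aperiod} with $B=S_n$ itself and any single state $i_1$. Since each $\alpha_{i_1}$ is a bijection (invertibility), $\alpha_{i_1}(B)=B$ with $|B|=2\ge2$, so $F$ is aperiodic. If a less degenerate witness is preferred, I would exhibit instead a word in the states whose composite $\alpha$ swaps the two letters.

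\emph{Step 3 (every loop-spanned plane is periodic).} The complex has one vertex, so the loops are the single letters $a\in\Vm$ and $b\in\Sn$, giving only $mn=4$ planes to examine. By four-way determinism the central cross $(a^\infty,b^\infty)$ determines the whole configuration $x_{a,b}$: each quadrant is filled tile by tile using the appropriate corner determinism. Row $k$ of the upper quadrants is obtained from row $k-1$ by the transducer action of the state $a$ and its inverse. Starting from the constant word $b^\infty$, every row is therefore periodic with period bounded by the finite orbit of $b$ under $\lambda(a,\cdot)$. Symmetrically, each column is periodic under the action of $\pi(\cdot,b)$. I would compute the minimal $k,l\ge1$ with $\lambda(a,\cdot)^{l}$ fixing $b$ and $\pi(\cdot,b)^{k}$ fixing $a$. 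From these I would deduce $a^{k}b^{l}=b^{l}a^{k}$ in $\pi_1(Y_A)$, so that $(k,l)$, suitably signed, lies in $\Per(F,x_{a,b})$. Concretely, this amounts to checking that the $k\times l$ block sitting on the corner of the cross tiles the plane.

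The main obstacle is Step 3. Periodicity of rows and columns separately does not by itself give a two-dimensional period, so I must show that the $k\times l$ block at the cross repeats in all four quadrants. My plan is to do this by direct computation of the finitely many blocks for the four pairs $(a,b)$, using the fact that in the example each $\lambda(a,\cdot)$ and $\pi(\cdot,b)$ is an involution or the identity. This forces $k,l\le2$ and makes the check a finite table.
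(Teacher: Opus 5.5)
Your forward direction is fine and is essentially the paper's argument. The genuine gap is in Step~3, at the sentence ``the loops are the single letters $a\in\Vm$ and $b\in\Sn$, giving only $mn=4$ planes''. In a one-vertex complex every closed edge path is a loop. The paper's loop-spanned planes are $\axis(a)\times\axis(b)$ for arbitrary loops: the central cross $(a^\infty,b^\infty)$ is a periodic pair, and Remark~\ref{rem:rattaggi} takes $a\in\langle\Vm\rangle$, $b\in\langle\Sn\rangle$. So there are infinitely many loop-spanned planes, and a finite table of blocks for the four single-letter pairs cannot show that $Y_A$ has no Wise anti-torus.

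Nor can the mechanism you sketch be extended to longer words, or even trusted for single letters, because finite orders of $\lambda(a,\cdot)$ and $\pi(\cdot,b)$ do not give $a^kb^l=b^la^k$. In Appendix~\ref{app:3x2}, $\pi(\cdot,s_0)^2$ fixes $v_0$ and $\lambda(v_0,\cdot)^2=\mathrm{id}$, yet $f_0^2g_0^2=g_0g_1f_2f_0\neq g_0^2f_0^2$. Even for $\theta_4^a$ your recipe gives $(k,l)=(2,1)$ for $(v_0,s_0)$. But $v_0^2s_0=s_0v_1^2$, and the commuting powers are actually $v_0s_0^2=s_0^2v_0$.

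The idea you need, and the one the paper uses, is structural and handles all loops at once. For $\theta_4^a$ one has $\lambda(v_i,\cdot)\equiv\mathrm{id}$, so north and south labels agree on every tile. Hence each column of tiles carries a constant letter $b_i$, and the state words on successive vertical lines satisfy $a_{i-1,j}=\pi(a_{i,j},b_i)$. That is, $a_{i,j}=\sigma^{c_i}(a_{0,j})$, with a single permutation applied to each whole line. If the vertical axis is $a^\infty$ for any loop $a$, then vertical translation by the length of $a$ is a period of $x_{a,b}$, whatever $b$ is.

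The same description also gives aperiodicity directly. Take the letter sequence $(b_i)$ and the axis word $(a_{0,j})$ both aperiodic. The first forces the horizontal component of any period to vanish, and the second then forces the vertical component to vanish.

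This is preferable to your Step~2, which uses a degenerate instance of Proposition~\ref{prp:aperiod} with $B=S_n$. Read that way, the criterion would make every invertible automaton aperiodic, including $\theta_2$, which the paper lists as periodic. Your fallback also fails for $\theta_4^a$, since every composite of $\alpha$'s is the identity there.

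Finally, in Step~1 commit to $\theta_4^a$. What drives the whole argument is $\lambda\equiv\mathrm{id}$, and a substitute chosen only to share a lamplighter-type $\lambda$-action need not have it.
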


\begin{proof}
A loop-spanned period-free plane is in particular period-free, hence an anti-torus,
so $A$ is bi-reversible and $F$ aperiodic by Theorem~\ref{thm:main}. For the failure
of the converse take the lamplighter $\theta_4^a$ of Appendix~\ref{app:2x2}, which is
bi-reversible with $F$ aperiodic. Since $\lambda(v_i,\cdot)\equiv\mathrm{id}$, the
north and south labels of every tile agree, so the horizontal label is constant up
each column, say $b_i$, and the states propagate horizontally,
$a_{i-1,j}=\pi(a_{i,j},b_i)$, giving $a_{i,j}=\sigma^{c_i}(a_{0,j})$ for suitable
$c_i\in\{0,1\}$; choosing the vertical word $(a_{0,j})_j$ aperiodic gives
$\Per(F,x)=\{0\}$, so $F$ is aperiodic. But a loop-spanned plane has $(a_{0,j})_j$ a
periodic axis, which forces a vertical period; so no loop-spanned plane of
$\theta_4^a$ is period-free.
\end{proof}

\begin{remark}\label{rem:gap}
The two notions therefore genuinely differ, the obstruction being the
\emph{realisation} of a period-free configuration on loop axes. Four-way determinism
makes the loop-spanned configurations dense in $F^\Delta$, and the Davidson--Yang
dichotomy given in Remark~\ref{rmk:aperiod} controls which periods occur once $F$
\emph{is} periodic; but density together with that control does not produce a
period-free loop-spanned plane --- the periodic loop-spanned planes of $\theta_4^a$
need not share a single period, so they impose none on $F$. The genuinely
two-dimensional, free behaviour that does carry a Wise
anti-torus first appears at $m=2$, $n=3$ (Example~\ref{ex:53}), whose tiling has both
axes constant, hence loop-spanned. Characterising the aperiodic $F$ that admit a Wise
anti-torus is exactly the question left to \cite{W,GM}.
\end{remark}

\subsection*{Relation to square-complex groups}
The loop-spanned form is the point of contact with the group-theoretic theory of
anti-tori in $(2m,2n)$--groups developed by Rattaggi \cite{Rat}, following
Bridson--Wise and Wise \cite{W}.

\begin{remark}[The loop-spanned form is Rattaggi's group-theoretic anti-torus]
\label{rem:rattaggi}
When $A$ is full and bi-reversible, $Y_A$ is a complete square complex and
$\pi_1(Y_A)$ is a $(2m,2n)$--group in the sense of \cite[\S1]{Rat}: its link is the
complete bipartite graph $K_{2m,2n}$ (Proposition~\ref{prop:csc}), which is exactly
Rattaggi's link condition, and the $mn$ squares\,---\,whose boundaries alternate
between $\Vm$-edges and $\Sn$-edges\,---\,are his relator set $R_{m\cdot n}$. The two
factors $\langle\Vm\rangle$ and $\langle\Sn\rangle$ are then free of ranks $m$ and
$n$ \cite[Corollary~2(1)]{Rat}, and $\pi_1(Y_A)$ is centreless once $m,n\ge2$
\cite[Corollary~2(2)]{Rat}. Under this dictionary a Wise anti-torus, spanned by the
axes of $a\in\langle\Vm\rangle$ and $b\in\langle\Sn\rangle$, is precisely an
anti-torus in the sense of \cite[Definition~4]{Rat}: a pair with
$a^{r}b^{s}\neq b^{s}a^{r}$ for all $r,s\in\ZZ\setminus\{0\}$ (equivalently, no
positive powers of $a,b$ commute). Thus Rattaggi's definition attaches to the
\emph{stronger}, loop-spanned notion of this section\,---\,not to the configuration
anti-torus of Definition~\ref{def:antitorus}, which has no direct group-theoretic
counterpart and is the genuinely new object isolated by Lemma~\ref{lem:config}.
\end{remark}

\begin{remark}[Rattaggi's centralizer criterion in automaton terms]\label{rem:rho}
Rattaggi's homomorphism $\rho_v$ from one free factor to the symmetric group on the
generators of the other \cite[\S1]{Rat}, defined on generators by
$a^{-1}ba'=\tilde b$, is on single generators our corner permutation: it is the map
$\beta_{b}\colon\Vm\to\Vm$ of Proposition~\ref{prp:aperiod} (and the transposed
homomorphism recovers the $\alpha$'s). His Lemma~3 then reads: if some generator
$b$ acts by a fixed-point-free corner permutation\,---\,$\beta_{b}(a)\neq a$ for all
$a$\,---\,then $Z_{\pi_1(Y_A)}(b)=\langle b\rangle\cong\ZZ$
\cite[Lemma~3]{Rat}. This is a finite, tile-level check on a single corner map.

We stress, however, that the passage from a cyclic centralizer to an anti-torus
\emph{for every} $a\neq1$ \cite[Corollary~8]{Rat}, and the dichotomy ``anti-torus
$\iff$ non-commuting'' \cite[Proposition~5]{Rat}, both require $\pi_1(Y_A)$ to be
\emph{commutative transitive}. Rattaggi's quaternionic groups $\Gamma_{p,l}$ are
commutative transitive \cite[Proposition~14]{Rat}, but a generic bi-reversible full
automaton group is not, and we do not assume it. Lemma~3 transfers unconditionally
and yields the centralizer collapse; the anti-torus conclusion does not. This is
precisely why the configuration/aperiodicity route of Theorem~\ref{thm:main}, rather
than the group-theoretic dichotomy, is what detects an anti-torus in general: in
Rattaggi's commutative-transitive world the configuration-level subtlety isolated by
Lemma~\ref{lem:config} does not arise, whereas here it is unavoidable.
\end{remark}

\begin{corollary}[Irreducibility]\label{cor:irred}
If $Y_A$ contains a Wise anti-torus, then $\pi_1(Y_A)$ is an irreducible lattice in
$\mathrm{Aut}(\mathcal T_{2m})\times\mathrm{Aut}(\mathcal T_{2n})$.
\end{corollary}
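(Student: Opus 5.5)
The plan is to reduce the statement to a known irreducibility criterion for lattices in products of trees, and then to supply the hypotheses of that criterion from the Wise anti-torus. Under the running assumption, $A$ is bi-reversible. This holds by Proposition~\ref{prop:loops}, since a Wise anti-torus is an anti-torus and Theorem~\ref{thm:main} then applies. Hence Proposition~\ref{prop:csc} makes $Y_A$ a complete square complex. Its universal cover is $\mathcal V\times\mathcal H$, with $\mathcal V\cong\mathcal T_{2m}$ and $\mathcal H\cong\mathcal T_{2n}$, because the link is $K_{2m,2n}$ and so every vertex has vertical valence $2m$ and horizontal valence $2n$. The group $\Gamma=\pi_1(Y_A)$ acts freely and cocompactly on this product, since $Y_A$ is a finite complex, and it preserves the $VH$ structure because $Y_A$ is a $VH$-complex. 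Therefore $\Gamma$ is a cocompact lattice in $\mathrm{Aut}(\mathcal T_{2m})\times\mathrm{Aut}(\mathcal T_{2n})$. This first step is routine.

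For irreducibility, I would use the standard characterisation of reducible cocompact torsion-free lattices in products of trees, as in \cite{BM2} or in Wise's treatment \cite{W}. The lattice $\Gamma$ is reducible exactly when it is virtually a product $\Gamma_1\times\Gamma_2$ with $\Gamma_i$ lattices in the factors. Equivalently, $\Gamma$ has a finite-index subgroup whose quotient complex is a product of two graphs. Equivalently again, the projections $\mathrm{pr}_i(\Gamma)$ are discrete. I would use the combinatorial form: if $\Gamma$ is reducible, pass to a finite-index subgroup $\Gamma'=\Gamma_1\times\Gamma_2$, so that $\widetilde Y_A/\Gamma'=G_1\times G_2$ for finite graphs $G_1,G_2$.

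The contradiction comes from the Wise anti-torus $\axis(a)\times\axis(b)$. Since $\Gamma'$ has finite index, some powers $a^r$ and $b^s$ with $r,s\geq 1$ lie in $\Gamma'$. The element $a^r$ translates along a vertical axis and fixes the horizontal coordinate, so it acts trivially on the $\mathcal H$-factor. Because $\Gamma'$ acts diagonally, $a^r$ therefore lies in $\Gamma_1$, the kernel of the action on $\mathcal H$ restricted to $\Gamma'$. Symmetrically, $b^s$ lies in $\Gamma_2$. Elements of different direct factors commute, so $a^rb^s=b^sa^r$. This contradicts the definition of a Wise anti-torus, namely that no positive powers of $a$ and $b$ commute. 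Equivalently, $\langle a^r,b^s\rangle\cong\ZZ^2$ would act cocompactly on the plane, giving it a nonzero period.

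The main obstacle is justifying the identification of the factors $\Gamma_i$ with the stabilisers, that is, showing that $a^r$ acts trivially on $\mathcal H$ and hence lies in $\Gamma_1$. The point to check is that a vertical loop of $Y_A$ lifts to an element preserving every horizontal coordinate. This holds for the deck action only up to the $\mathcal H$-action of $a$, which may be a nontrivial elliptic automorphism fixing the vertex. To handle this, I would pass to $\ker(\Gamma'\to\mathrm{Aut}(\mathcal H))$ after a further finite-index step. Alternatively, I would argue via projections: $\mathrm{pr}_{\mathcal H}(a)$ is elliptic, and in a reducible lattice the elliptic projections form a finite group. So some power $a^{r'}$ projects trivially, and the same commutation argument applies.
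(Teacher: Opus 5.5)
Your proposal is correct once the step you flag is repaired, but it takes a different route from the paper. The paper's proof is a two-line citation. By Remark~\ref{rem:rattaggi}, $\pi_1(Y_A)$ is a $(2m,2n)$-group and a Wise anti-torus is an anti-torus in the sense of \cite[Definition~4]{Rat}, so irreducibility is exactly Wise's result \cite[Proposition~9]{Rat}.

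You instead reprove that result. Assuming reducibility, you pass to a finite-index $\Gamma'=\Gamma_1\times\Gamma_2$ split along the two trees and place powers $a^r$ and $b^s$ in different factors. You conclude that they commute, or equivalently that $\langle a^r,b^s\rangle$ translates the plane $\axis(a)\times\axis(b)$ and gives it a nonzero period. Your route is self-contained modulo the standard Burger--Mozes characterisation of reducible lattices, and it shows why an anti-torus obstructs a virtual product decomposition. It also uses only that the loop-spanned plane is period-free, so it bypasses the group-theoretic dictionary of Remark~\ref{rem:rattaggi}. The paper's route buys brevity and inherits Rattaggi's precise notion of irreducibility.

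On the step you flag: the sentence in your third paragraph is false as written. A vertical loop $a$ fixes the $\mathcal H$-coordinate $h_0$ of the base vertex, but $\mathrm{pr}_{\mathcal H}(a)$ is in general a nontrivial elliptic automorphism, since it permutes the germs at $h_0$ via the corner permutations of the tiles. Your first repair is circular, because showing that some power of $a$ lies in $\ker(\Gamma'\to\mathrm{Aut}(\mathcal H))$ is precisely what must be proved.

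Your second repair works once stated precisely. Reducibility makes $\mathrm{pr}_{\mathcal H}(\Gamma)$ discrete, so the stabiliser of $h_0$ in it is finite, hence $\mathrm{pr}_{\mathcal H}(a)$ has finite order. Then some $a^{r'}$ acts trivially on $\mathcal H$ and some $b^{s'}$ acts trivially on $\mathcal V$, and faithfulness of the deck action gives $a^{r'}b^{s'}=b^{s'}a^{r'}$.

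There is also a one-line fix inside $\Gamma'$. Write $a^r=g_1g_2$ with $g_i\in\Gamma_i$. Then $\mathrm{pr}_{\mathcal H}(g_2)=\mathrm{pr}_{\mathcal H}(a^r)$ fixes $h_0$, while $g_2$ acts trivially on $\mathcal V$. So $g_2$ fixes a vertex of $\widetilde Y_A$ and is trivial by freeness, which gives $a^r\in\Gamma_1$ after all.
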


\begin{proof}
A Wise anti-torus is an anti-torus in $\pi_1(Y_A)$ in the sense of
\cite[Definition~4]{Rat} (Remark~\ref{rem:rattaggi}); apply
\cite[Proposition~9]{Rat}, due to Wise.
\end{proof}

\section{Proof of the characterisation}

\begin{proof}[Proof of Theorem~\ref{thm:main}]
If $Y_A$ contains an anti-torus then a plane of $\widetilde Y_A$ is defined, so
$Y_A$ is a CSC and $A$ is bi-reversible (Proposition~\ref{prop:csc}); and then $F$
is aperiodic (Proposition~\ref{prop:antitorus}). Conversely, if $A$ is
bi-reversible then $Y_A$ is a CSC (Proposition~\ref{prop:csc}), and if $F$ is
aperiodic then $Y_A$ contains an anti-torus (Proposition~\ref{prop:antitorus}).
\end{proof}

\begin{corollary}\label{cor:simple}
$Y_A$ contains an anti-torus if and only if $A$ is bi-reversible and
$C^*(F_{\theta_A})$ is simple.
\end{corollary}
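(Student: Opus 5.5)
The plan is to reduce Corollary~\ref{cor:simple} to Theorem~\ref{thm:main} by replacing the aperiodicity hypothesis with simplicity of $C^*(F_{\theta_A})$. By Theorem~\ref{thm:main}, $Y_A$ contains an anti-torus if and only if $A$ is bi-reversible and $F=F_{\theta_A}$ is aperiodic. It therefore suffices to show that, for a one-vertex rank-two graph $F_\theta$ with $m,n\ge1$, aperiodicity in the sense of Definition~\ref{def:aper} is equivalent to simplicity of $C^*(F_\theta)$. The bi-reversibility clause is carried along unchanged on both sides.

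For the equivalence I will use the standard simplicity theorem for row-finite, source-free $k$-graphs: $C^*(\Lambda)$ is simple if and only if $\Lambda$ is cofinal and aperiodic. I intend to cite Robertson--Sims, or equivalently Kumjian--Pask, where aperiodicity is the Kumjian--Pask condition, together with the Davidson--Yang treatment of the single-vertex case \cite{DY}. The work then comes in two steps.

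\textbf{Cofinality is automatic.} $F_\theta$ has a single vertex and finitely many edges of each degree, and no sources since $m,n\ge1$. Every infinite path therefore passes through the unique vertex, so cofinality holds trivially.

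\textbf{Matching the notions of aperiodicity.} The simplicity theorem is stated with aperiodicity phrased through one-sided infinite paths $x\colon\Omega_2\to F$. Definition~\ref{def:aper} is phrased through two-sided configurations $x\in F^\Delta$. I will not prove this matching by hand. Instead I will invoke Davidson--Yang: their dichotomy in Remark~\ref{rmk:aperiod} is formulated for the one-vertex graph and shows that either a single nonzero period $(a,-b)$ is common to all paths, or some path is period-free, and that this is exactly the condition governing simplicity of $C^*(F_\theta)$ \cite[Theorem~3.1]{DY}.

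\textbf{Main obstacle.} Some care is needed to see that a common period of all one-sided paths is the same as a common period of all two-sided configurations. This holds in one direction because restricting a two-sided configuration gives a one-sided path. In the other direction, every one-sided path extends to a two-sided one: $\theta$ is a bijection, so $F$ has no sinks, and one can extend to the south-west by unique factorisation. Once that is settled, the chain reads
\[
Y_A \text{ has an anti-torus} \iff \bigl(A \text{ bi-reversible and } F \text{ aperiodic}\bigr) \iff \bigl(A \text{ bi-reversible and } C^*(F) \text{ simple}\bigr),
\]
which is the corollary.
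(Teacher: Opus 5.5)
Your proposal is correct and is essentially the paper's proof: combine Theorem~\ref{thm:main} with the Robertson--Sims/Davidson--Yang result that, for these row-finite, source-free and trivially cofinal one-vertex rank-two graphs, simplicity of $C^*(F_{\theta_A})$ is equivalent to aperiodicity. You also check that the one-sided path formulation used in that simplicity theorem agrees with the two-sided Definition~\ref{def:aper}, by restricting all translates in one direction and extending backwards through the single vertex in the other; this supplies a step the paper's one-line proof leaves implicit.
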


\begin{proof}
For these row-finite cofinal one-vertex rank-two graphs, simplicity of
$C^*(F_{\theta_A})$ is the aperiodicity condition \cite{RobS,DY}; combine with
Theorem~\ref{thm:main}.
\end{proof}

\section{A worked anti-torus, and the role of each hypothesis}

\begin{example}[A bi-reversible aperiodic automaton: an anti-torus]\label{ex:53}
Let $V=\{f_0,f_1\}$ and $S=\{g_0,g_1,g_2\}$. This realises the automaton of
\cite[Example~4.1]{W} (the transpose of the one in Appendix~\ref{app:3x2}), with six
tiles $T_{(u,b)}=(N,S,E,W)=(\lambda(u,b),b,u,\pi(u,b))$:
\[
\begin{array}{c|cccc}
T_{(u,b)} & N & S & E & W\\\hline
T_{(f_0,g_0)} & g_1 & g_0 & f_0 & f_0\\
T_{(f_0,g_1)} & g_2 & g_1 & f_0 & f_1\\
T_{(f_0,g_2)} & g_0 & g_2 & f_0 & f_1\\
T_{(f_1,g_0)} & g_1 & g_0 & f_1 & f_1\\
T_{(f_1,g_1)} & g_0 & g_1 & f_1 & f_0\\
T_{(f_1,g_2)} & g_2 & g_2 & f_1 & f_0
\end{array}
\]
Each of the four corner maps is a bijection onto its six-element range; e.g.\ the
(south,west) pairs are
$(g_0,f_0),(g_1,f_1),(g_2,f_1),(g_0,f_1),(g_1,f_0),(g_2,f_0)$, all distinct.
So the tiles are four-way deterministic and, by Proposition~\ref{prop:csc}, $Y$ is
a complete square complex.

The output permutations are $\alpha_{f_0}=(g_0\,g_1\,g_2)$ and
$\alpha_{f_1}=(g_0\,g_1)$, which generate a transitive, non-cyclic action. By the
argument in Appendix~\ref{app:3x2}, the two-sided path space carries an aperiodic
configuration and the associated self-similar group is the free group $\mathbb{F}_3$
\cite{BGKMNSS}; in particular $F$ is aperiodic. By Theorem~\ref{thm:main}, $Y$
contains an anti-torus.

Concretely, propagating the unique tiling with all-$f_1$ left column and all-$g_2$
bottom row via the $(\text{south,west})\to$ tile rule, the first two tile-columns are
\[
\text{col }1:\ T_{(f_0,g_2)},\,T_{(f_1,g_0)},\,T_{(f_0,g_1)},\,T_{(f_0,g_2)},\dots
\quad(\text{vertical period }3),
\]
\[
\text{col }2:\ T_{(f_1,g_2)},\,T_{(f_0,g_2)},\,T_{(f_0,g_0)},\,T_{(f_1,g_1)},\,
T_{(f_1,g_0)},\,T_{(f_1,g_1)},\dots,
\]
with north-edge strings $g_0g_1g_2g_0\cdots$ in column $1$ and $g_2g_0g_1g_0g_1\cdots$
in column $2$: no common vertical period. The non-commutation is already visible in
$f_0g_0=g_1f_0\neq g_0f_0$, and $f_0$ acts with infinite order:
$f_0\!\cdot\!g_0,f_0^2\!\cdot\!g_0,f_0^3\!\cdot\!g_0$ cycle through $g_1,g_2,g_0$,
yet $f_0^3\!\cdot\!(g_0g_0)=g_0g_1\neq g_0g_0$, so $f_0^3\neq\mathrm{id}$. This
free-like behaviour is the mechanism of the anti-torus.
\end{example}

\begin{example}[Full but not four-way deterministic: no complete square complex]
\label{ex:lamp}
The automaton $\theta_7$ of Table~\ref{tables} is full --- it presents
a one-vertex rank-two graph --- but is neither invertible nor reversible: both
$\lambda(v_0,\cdot)$ and $\lambda(v_1,\cdot)$ are constant, and so is
$\pi(\cdot,s_0)$, so none is a bijection. Hence it is not four-way deterministic, $Y$
is not a complete square complex (Proposition~\ref{prop:csc}), the universal cover is
not a product of two trees, and there is no anti-torus. This shows the
bi-reversibility hypothesis of Theorem~\ref{thm:main} secures the ambient geometry
and cannot be dropped. By contrast the lamplighter $\theta_4^a$ is full \emph{and}
four-way deterministic --- in the $2$-graph presentation of
Table~\ref{tables} it is invertible and reversible --- and is
the aperiodic anti-torus treated in Appendix~\ref{app:2x2}.
\end{example}

\begin{example}[Bi-reversible but periodic: no anti-torus]
The automaton $\theta_2$ of Table~\ref{tables} is bi-reversible, so
$Y_{\theta_2}$ is a CSC, but $F_{\theta_2}$ is periodic, with
$C^*(F^+_{\theta_2})\cong M_2(C(\mathbb T))$. By Theorem~\ref{thm:main} (or
Corollary~\ref{cor:simple}, since $M_2(C(\mathbb T))$ is not simple) it contains no
anti-torus: every plane factors through a torus. The trivial $\theta_1$, with
$F_{\theta_1}\cong\NN^2$ and $C^*\cong C(\mathbb T^2)$, is similar --- every plane
\emph{is} a torus.
\end{example}

\section{Remarks}

\begin{remark}[The hypotheses are independent]
The bijection $\theta_2$ in Table~\ref{tables} is bi-reversible but periodic, so aperiodicity cannot be dropped. In the
other direction, a full automaton that is not four-way deterministic --- e.g.\
$\theta_7$ of Example~\ref{ex:lamp} --- does not
present a complete square complex, so its ambient space is not a product of two trees
and bi-reversibility cannot be dropped either. The two requirements are logically
independent; at $2\times2$ they are realised separately, the only aperiodic
$2\times2$ automaton (the lamplighter) being itself bi-reversible, so that the
genuinely mixed case appears only for larger alphabets.
\end{remark}

\begin{remark}[Configuration versus loop form]
Lemma~\ref{lem:config} recasts an anti-torus (Definition~\ref{def:antitorus}) as a
period-free configuration, so the main equivalence (Theorem~\ref{thm:main}) is
internal to the rank-two graph, with no appeal to \cite{W,GM}. The geometric
loop-spanned form of \cite{W} is strictly stronger
(Proposition~\ref{prop:loops}) and is \emph{not} equivalent to aperiodicity --- the
lamplighter (Appendix~\ref{app:2x2}) is aperiodic with no Wise anti-torus --- so the
loop-realisation step is exactly what still rests on \cite{W,GM}.
\end{remark}

\begin{remark}[Higher rank]
The template suggests a rank-$d$ statement: a full $d$-dimensional automaton with
$2^d$-way determinism presents a one-vertex rank-$d$ graph and a cube complex whose
universal cover is a product of $d$ trees, and ``contains a $d$-dimensional
anti-torus'' should be full determinism together with aperiodicity of the rank-$d$
graph. The determinism half is routine; the periodicity half needs the rank-$d$
analogue of \cite{DY}, and the cube complex requires an associativity constraint
absent at $d=2$.
\end{remark}

\appendix

\section{The full \texorpdfstring{$2\times2$}{2x2} automata}\label{app:2x2}

Below we list the nine full Mealy automata with $|V|=|S|=2$,
labelled $\theta_1,\dots,\theta_8$ (with $\theta_4$ occurring as $\theta_4^a$ and
$\theta_4^c$). They are a natural test of Theorem~\ref{thm:main}: we settle the
anti-torus question for each through the two independent filters, bi-reversibility
(Proposition~\ref{prop:csc}) and aperiodicity (Proposition~\ref{prop:antitorus}).

\emph{Multiplication tables.} The cell in row $v_i$, column $s_j$ records the
right-hand side $s_{j'}v_{i'}$ of the factorisation $v_is_j=s_{j'}v_{i'}$, equivalently
$\theta(v_i,s_j)=(s_{j'},v_{i'})$.
\begin{table}[ht]
\centering
\renewcommand{\arraystretch}{1.15}
\begin{tabular}{ccc} 
$\theta_1\colon\begin{array}{c|cc} & s_0 & s_1\\\hline v_0 & s_0v_0 & s_0v_1\\ v_1 & s_1v_0 & s_1v_1\end{array}$ &
$\theta_2\colon\begin{array}{c|cc} & s_0 & s_1\\\hline v_0 & s_1v_0 & s_0v_0\\ v_1 & s_0v_1 & s_1v_1\end{array}$ &
$\theta_3\colon\begin{array}{c|cc} & s_0 & s_1\\\hline v_0 & s_1v_1 & s_1v_0\\ v_1 & s_0v_1 & s_0v_0\end{array}$ \\[2ex]
$\theta_4^a\colon\begin{array}{c|cc} & s_0 & s_1\\\hline v_0 & s_0v_1 & s_1v_0\\ v_1 & s_0v_0 & s_1v_1\end{array}$ &
$\theta_4^c\colon\begin{array}{c|cc} & s_0 & s_1\\\hline v_0 & s_0v_0 & s_1v_1\\ v_1 & s_0v_1 & s_1v_0\end{array}$ &
$\theta_5\colon\begin{array}{c|cc} & s_0 & s_1\\\hline v_0 & s_0v_1 & s_0v_0\\ v_1 & s_1v_1 & s_1v_0\end{array}$ \\[2ex]
$\theta_6\colon\begin{array}{c|cc} & s_0 & s_1\\\hline v_0 & s_1v_1 & s_1v_0\\ v_1 & s_0v_1 & s_0v_0\end{array}$ &
$\theta_7\colon\begin{array}{c|cc} & s_0 & s_1\\\hline v_0 & s_1v_0 & s_0v_0\\ v_1 & s_1v_1 & s_0v_1\end{array}$ &
$\theta_8\colon\begin{array}{c|cc} & s_0 & s_1\\\hline v_0 & s_0v_1 & s_1v_0\\ v_1 & s_1v_1 & s_0v_0\end{array}$
\end{tabular}
\caption{Multiplication tables for the nine full $2\times2$ automata.}
\label{tables}
\end{table}

\emph{Aperiodicity.} By the periodicity dichotomy of Remark~\ref{rmk:aperiod}
together with a per-automaton analysis, $F_\theta$ is aperiodic only for the
lamplighter pair $\theta_4^a,\theta_4^c$; the remaining seven are periodic. Indeed
$\theta_1$ has $F_{\theta_1}\cong\NN^2$ with $C^*\cong C(\mathbb T^2)$, while
$\theta_2$ and $\theta_3$ have $C^*\cong M_2(C(\mathbb T))$, and
$\theta_5,\dots,\theta_8$ reduce to periodic systems. By
Proposition~\ref{prop:antitorus}, then, only $\theta_4^a$ and $\theta_4^c$ are
candidates for an anti-torus.

\emph{Bi-reversibility.} Both candidates survive. The lamplighter
$\theta_4^a$ has each $\lambda(v_i,\cdot)$ a bijection of $S$ (invertible) and each
$\pi(\cdot,s_j)$ a bijection of $V$ (reversible), and its inverse automaton is again
reversible; so $\theta_4^a$ is bi-reversible, equivalently four-way deterministic,
and likewise $\theta_4^c$. By contrast $\theta_3$ and $\theta_7$ are neither
invertible nor reversible, hence not four-way
deterministic; and $\theta_1,\theta_2,\theta_5,\theta_6,\theta_8$ are invertible and
reversible but periodic. Combining the two filters:
\[
\renewcommand{\arraystretch}{1.2}
\begin{array}{l|ccc}
\text{automaton} & \text{inv.\ \& rev.} & F_\theta\ \text{aperiodic} &
  \text{anti-torus}\\\hline
\theta_1 & \checkmark & - & \text{no}\\
\theta_2 & \checkmark & - & \text{no}\\
\theta_3 & \times & - & \text{no}\\
\theta_4^a & \checkmark & \checkmark & \textbf{yes}\\
\theta_4^c & \checkmark & \checkmark & \textbf{yes}\\
\theta_5 & \checkmark & - & \text{no}\\
\theta_6 & \checkmark & - & \text{no}\\
\theta_7 & \times & - & \text{no}\\
\theta_8 & \checkmark & - & \text{no}
\end{array}
\]

\noindent
A dash marks a periodic automaton; for those an anti-torus is precluded by
Proposition~\ref{prop:antitorus} regardless of the square-complex question. For
$\theta_4^a,\theta_4^c$ both filters are passed, so Theorem~\ref{thm:main} yields an
anti-torus.

\emph{Conclusion.} Exactly two full $2\times2$ Mealy automata carry an anti-torus:
the lamplighter pair $\theta_4^a,\theta_4^c$, bi-reversible and aperiodic, with
self-similar group $\ZZ\wr(\ZZ/2\ZZ)$ (see \cite{GNS}). The other seven each fail a hypothesis ---
$\theta_3,\theta_7$ are not four-way deterministic, while
$\theta_1,\theta_2,\theta_5,\theta_6,\theta_8$ are bi-reversible but periodic, with
finite self-similar groups ($\{1\}$ for $\theta_1,\theta_2$; $\ZZ/2\ZZ$ for
$\theta_5,\theta_6,\theta_8$). The free, irreducible behaviour --- two-sided paths
carrying the free group $\mathbb{F}_3$ --- first appears one alphabet-size up, at
$m=2$, $n=3$, in Example~\ref{ex:53}.

\begin{remark}[On the lamplighter]\label{rem:lamp-app}
The aperiodic $2\times2$ anti-torus is the lamplighter. Its self-similar group $\ZZ\wr(\ZZ/2\ZZ)$ is
amenable, but this is a property of the automaton group $G(A)$, not of $\pi_1(Y_A)$,
and so is no obstruction: the anti-torus is a period-free plane in the sense of
Lemma~\ref{lem:config}, a statement about the two-sided path space of $F_\theta$
rather than about $G(A)$. 
\end{remark}

\section{The \texorpdfstring{$3\times2$}{3x2} anti-torus}\label{app:3x2}

Beyond the nine $2\times2$ automata, we record a full automaton with $|V|=3$,
$|S|=2$ that carries an anti-torus; it is the transpose of the worked
Example~\ref{ex:53}. Write $V_3=\{v_0,v_1,v_2\}$ and $S_2=\{s_0,s_1\}$, with
degree-$\eps_1$ generators $f_0,f_1,f_2$ and degree-$\eps_2$ generators $g_0,g_1$.
The transition and output functions are
\[
\begin{array}{c|cc}
\pi & s_0 & s_1\\\hline
v_0 & v_1 & v_2\\
v_1 & v_0 & v_0\\
v_2 & v_2 & v_1
\end{array}
\qquad\qquad
\begin{array}{c|cc}
\lambda & s_0 & s_1\\\hline
v_0 & s_1 & s_0\\
v_1 & s_0 & s_1\\
v_2 & s_1 & s_0
\end{array}
\]
giving the multiplication table (the cell in row $f_i$, column $g_j$ is the
right-hand side $g_{j'}f_{i'}$ of $f_ig_j=g_{j'}f_{i'}$):
\[
\renewcommand{\arraystretch}{1.2}
\begin{array}{c|cc}
 & g_0 & g_1\\\hline
f_0 & g_1f_1 & g_0f_2\\
f_1 & g_0f_0 & g_1f_0\\
f_2 & g_1f_2 & g_0f_1
\end{array}
\]

Here $\lambda(v_0,\cdot)=\lambda(v_2,\cdot)=\sigma$ and
$\lambda(v_1,\cdot)=\mathrm{id}$, so $A$ is invertible; $\pi(\cdot,s_0)=(v_0\,v_1)$
and $\pi(\cdot,s_1)=(v_0\,v_2\,v_1)$ are permutations of $V_3$, so $A$ is reversible;
and all four corner maps of the six Wang tiles are bijections, so $A$ is four-way
deterministic, hence bi-reversible, and $Y$ is a complete square complex
(Proposition~\ref{prop:csc}). 
\begin{equation} \label{eq:anti-torus2}
\begin{array}{l}
\begin{tikzpicture}[>=stealth]
\draw (0,0)--(1,0)--(1,1)--(0,1)--(0,0)--(1,1)--(0,1)--(1,0);
\draw (2,0)--(3,0)--(3,1)--(2,1)--(2,0)--(3,1)--(2,1)--(3,0);
\draw (4,0)--(5,0)--(5,1)--(4,1)--(4,0)--(5,1)--(4,1)--(5,0);
\draw (6,0)--(7,0)--(7,1)--(6,1)--(6,0)--(7,1)--(6,1)--(7,0);
\draw (8,0)--(9,0)--(9,1)--(8,1)--(8,0)--(9,1)--(8,1)--(9,0);
\draw (10,0)--(11,0)--(11,1)--(10,1)--(10,0)--(11,1)--(10,1)--(11,0);
% T_{(f0,g0)}
\node at (0.5,0.85) {\tiny $g_1$};
\node at (0.5,0.15) {\tiny $g_0$};
\node at (0.2,0.5) {\tiny $f_1$};
\node at (0.8,0.5) {\tiny $f_0$};
% T_{(f0,g1)}
\node at (2.5,0.85) {\tiny $g_0$};
\node at (2.5,0.15) {\tiny $g_1$};
\node at (2.2,0.5) {\tiny $f_2$};
\node at (2.8,0.5) {\tiny $f_0$};
% T_{(f1,g0)}
\node at (4.5,0.85) {\tiny $g_0$};
\node at (4.5,0.15) {\tiny $g_0$};
\node at (4.2,0.5) {\tiny $f_0$};
\node at (4.8,0.5) {\tiny $f_1$};
% T_{(f1,g1)}
\node at (6.5,0.85) {\tiny $g_1$};
\node at (6.5,0.15) {\tiny $g_1$};
\node at (6.2,0.5) {\tiny $f_0$};
\node at (6.8,0.5) {\tiny $f_1$};
% T_{(f2,g0)}
\node at (8.5,0.85) {\tiny $g_1$};
\node at (8.5,0.15) {\tiny $g_0$};
\node at (8.2,0.5) {\tiny $f_2$};
\node at (8.8,0.5) {\tiny $f_2$};
% T_{(f2,g1)}
\node at (10.5,0.85) {\tiny $g_0$};
\node at (10.5,0.15) {\tiny $g_1$};
\node at (10.2,0.5) {\tiny $f_1$};
\node at (10.8,0.5) {\tiny $f_2$};
\end{tikzpicture}
\end{array}
\end{equation}

The single-vertex rank-two graph associated with this data is aperiodic. From the
$\pi$-table, the state permutations $\beta_{g_j}\colon V_3\to V_3$,
$\beta_{g_j}(f_i)=\pi(v_i,s_j)$, are $\beta_{g_0}=(f_0\,f_1)$ and
$\beta_{g_1}=(f_0\,f_2\,f_1)$. Taking $A=\{f_0,f_1\}$ and the one-letter word $g_0$,
we have $\beta_{g_0}(A)=A$, so $F$ is aperiodic by Proposition~\ref{prp:aperiod}.

The self-similar group associated to $A$ is the free group $\mathbb{F}_3$ --- the automaton
$G(2240)$ in the \cite{BGKMNSS} classification of $3$-state $2$-letter automata. By
Theorem~\ref{thm:main}, $Y$ contains an anti-torus.

Unlike the $2\times2$ lamplighter, this anti-torus is loop-spanned in the sense of
\S\ref{sec:loops}: its tiling has both axes constant (one carrying $f_0$, the other $g_0$) with
aperiodic interior, so both axes are loop axes and $(f_0,g_0)$ is a Wise anti-torus.
It is the smallest full automaton carrying one.

Moreover $\beta_{g_1}=(f_0\,f_2\,f_1)$ is a fixed-point-free $3$-cycle, so by
\cite[Lemma~3]{Rat} the centralizer $Z_{\pi_1(Y)}(g_1)=\langle g_1\rangle\cong\ZZ$
is cyclic; and since $Y$ carries a Wise anti-torus, $\pi_1(Y)$ is an irreducible
$(6,4)$--group (Corollary~\ref{cor:irred}, via \cite[Proposition~9]{Rat}). The
transposed Example~\ref{ex:53} is the corresponding irreducible $(4,6)$--group, the
degree of Wise's original anti-torus \cite{W}.

\end{document}